\definecolor{yellow2}{cmyk}{0, 0.18, 0.74, 0.18}
\definecolor{green2}{cmyk}{0.58, 0, 0.99, 0.01}
\newtheorem{thr}{Theorem}
\newtheorem{cor}[thr]{Corollary}
\newtheorem{observation}[thr]{Observation}
\theoremstyle{definition}
\theoremstyle{remark}
\numberwithin{equation}{section}
\begin{document}

\title[Nonnegative rank: Hard problems, easy solutions]{The nonnegative rank of a matrix: \\ Hard problems, easy solutions}

\author{Yaroslav Shitov}
\address{Higher School of Economics, 20 Myasnitskaya Ulitsa, Moscow 101000, Russia}
\email{yaroslav-shitov@yandex.ru}

\subjclass[2010]{15A03, 15A23, 68Q17}

\keywords{Nonnegative matrix factorization, computational complexity}




\begin{abstract}
Using elementary linear algebra, we develop a technique that leads to solutions of two widely known problems on nonnegative matrices. First, we give a short proof of the result by Vavasis stating that the nonnegative rank of a matrix is NP-hard to compute. This proof is essentially contained in the paper by Jiang and Ravikumar, who discussed this topic in different terms fifteen years before the work of Vavasis. Secondly, we present a solution of the Cohen--Rothblum problem on rational nonnegative factorizations, which was posed in 1993 and remained open.
\end{abstract}

\maketitle

\section{Introduction}

Let $A$ be a matrix with nonnegative real entries. The \textit{nonnegative rank} of $A$ is the smallest $k$ for which there exist $k$ nonnegative rank-one matrices that sum to $A$. This concept is related to the \textit{nonnegative matrix factorization} problem, which is the task to find the optimal approximation of a given matrix with a matrix of given nonnegative rank. This problem has important applications in different branches of modern science, including data mining~\cite{LS}, combinatorial optimization~\cite{Yan}, statistics~\cite{KRS}, quantum mechanics~\cite{CR}, and many others.

This paper presents a combinatorial approach that leads to very short solutions of two widely known problems on nonnegative matrices, and one of these accomplishments is a proof that the nonnegative rank is NP-hard to compute. The paper~\cite{Vavas} by Vavasis has become a standard reference to this result in applied mathematics, and the proof it contains is based on ingenious geometric considerations. The proof we present is short and does not require any special knowledge. When this writing was about to be completed, the author learned that a similar proof is contained in the paper~\cite{JR} by Jiang and Ravikumar. Their Lemma~3.3 is stated in different terms, but it is essentially the oldest reference to the NP-hardness of nonnegative rank we are aware of. 

Another result that we obtain is a solution of the \textit{Cohen--Rothblum problem}. Assume that $A$ is a rational matrix with nonnegative rank $k$, do there exist $k$ \textit{rational} nonnegative rank-one matrices that sum to $A$? This problem was posed in the foundational paper~\cite{CR} more than 20 years ago, and it has been widely discussed in the literature. Vavasis~\cite{Vavas} demonstrates the connection between the Cohen--Rothblum problem and algorithmic complexity of nonnegative rank. Another notable application of nonnegative ranks is the theory of extended formulations of polytopes~\cite{Yan}, and the possible lack of optimal rational factorizations is a difficulty in this theory~\cite{Roth}. Kubjas, Robeva, and Sturmfels~\cite{KRS} consider this problem in context of modern statistics; they also give a partial solution of this problem. We note the paper~\cite{GFR}, which contains a solution of a similar problem but asked for \textit{positive semidefinite} rank. Our paper contains an explicit example of a $21\times 21$ matrix with integral entries that can be written as a sum of $19$ nonnegative rank-one matrices but not as a sum of $19$ rational nonnegative rank-one matrices. This gives a solution of the Cohen--Rothblum problem, which has been open until now. (Our solution of the Cohen--Rothblum problem appears on arXiv as the manuscript~\cite{myCR}, which has been uploaded concurrently with another solution of the problem, see~\cite{anotherproof}. Also, the earlier paper~\cite{myNRDF} contains a related result that is both more general and more specific: There is a real matrix $A$ of conventional rank five which admits a nonnegative rank factorization but no such factorization is rational in the entries of $A$.)

\section{Our technique}

Let $A$ be a nonnegative matrix with entries in a subfield $\mathbb{F}\subset\mathbb{R}$. The \textit{nonnegative rank} of $A$ \textit{with respect to} $\mathbb{F}$ (denoted $\operatorname{rank}_{\mathbb{F}+} A$) is the smallest $k$ for which $A$ is a sum of $k$ nonnegative rank-one matrices over $\mathbb{F}$. In particular, the quantity $\operatorname{rank}_{\mathbb{R}+}(A)$ is the usual nonnegative rank as defined in the introduction.
Combinatorial methods of studying the nonnegative rank have a long history, see the recent survey~\cite{FKPT}, the older papers~\cite{GP, Orl}, and references therein. Methods that employ zero patterns of matrices are still developing and 
being used in modern investigations~\cite{CCZ, FMPTdW}. As we will see later, the combinatorial analysis of zero patterns plays a crucial role in our approach. 

Let $\alpha_1,\ldots,\alpha_n$ be nonnegative real numbers. The $5\times(n+4)$ matrix
$$\mathcal{B}(\alpha_1,\ldots,\alpha_n)=\begin{pmatrix}
\alpha_1&\ldots&\alpha_n&1&1&1&1\\
1&\ldots&1&1&1&0&0\\
0&\ldots&0&0&1&1&0\\
0&\ldots&0&0&0&1&1\\
0&\ldots&0&1&0&0&1
\end{pmatrix}$$
will be particularly important in our consideration. Its lower-right $4\times4$ submatrix
$$\mathcal{B}_0=
\begin{pmatrix}
1&1&0&0\\
0&1&1&0\\
0&0&1&1\\
1&0&0&1
\end{pmatrix}
$$
appears in the note~\cite{Thomas}, which is one of the oldest references on the topic of nonnegative matrix factorizations. The reason why $\mathcal{B}_0$ is so important is that it has different conventional and nonnegative ranks, --- one has $\operatorname{rank} \mathcal{B}_0=3$ and $\operatorname{rank}_+ \mathcal{B}_0=4$. In particular, the row of ones can be represented as a linear combination of the rows of $\mathcal{B}_0$ in different ways: one can take the sum of the first and third rows, or the sum of the second and fourth rows, or any convex combination of these two sums. This leads us to the following easy but important observation.

\begin{observation}\label{obs1}
$\operatorname{Rank}_{\mathbb{F}+}\mathcal{B}(\alpha_1,\ldots,\alpha_n)=4$ if and only if $\alpha_1=\ldots=\alpha_n\in[0,1]\cap\mathbb{F}$.
\end{observation}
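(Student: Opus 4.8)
Everything follows from the inequality $\operatorname{Rank}_{\mathbb{F}+}M\ge\operatorname{rank}M$ combined with a study of zero patterns. I would first note that, whatever the $\alpha_i$ are, rows $2,3,4,5$ of $\mathcal{B}(\alpha_1,\ldots,\alpha_n)$ are linearly independent: a glance at columns $1,\,n+1,\,n+2,\,n+3$ kills the coefficients in turn. Hence $\operatorname{rank}\mathcal{B}(\alpha_1,\ldots,\alpha_n)\ge4$, so $\operatorname{Rank}_{\mathbb{F}+}\mathcal{B}(\alpha_1,\ldots,\alpha_n)\ge4$ always. Next, if the first row equals $aR_2+bR_3+cR_4+dR_5$, comparison of columns $n+1,\ldots,n+4$ forces $a=c$ and $b=d=1-a$, while comparison of any column $j\le n$ forces $\alpha_j=a$; so the first row lies in the span of the other four precisely when all the $\alpha_i$ coincide, in which case the combination is $\alpha(R_2+R_4)+(1-\alpha)(R_3+R_5)$ with $\alpha$ the common value. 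Thus $\operatorname{rank}\mathcal{B}(\alpha_1,\ldots,\alpha_n)=5$ unless the $\alpha_i$ are all equal, so $\operatorname{Rank}_{\mathbb{F}+}=4$ can hold only then; and since in that case columns $1,\ldots,n$ coincide, I may delete all but one of them---this alters neither the nonnegative rank nor the field of a factorization---and replace $\mathcal{B}(\alpha,\ldots,\alpha)$ by the $5\times5$ matrix $B$ that remains, with $\alpha\ge0$. It remains to prove $\operatorname{Rank}_{\mathbb{F}+}B=4$ if and only if $\alpha\in[0,1]\cap\mathbb{F}$.

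If $\alpha\in[0,1]\cap\mathbb{F}$, I would exhibit $B=UV$ where $U$ is the $5\times4$ matrix with first row $(\alpha,\,1-\alpha,\,\alpha,\,1-\alpha)$ and rows $2,3,4,5$ equal to the identity $I_4$, and $V$ is the $4\times5$ matrix with first column $(1,0,0,0)^{\mathsf T}$ and last four columns equal to $\mathcal{B}_0$. Every entry of $U$ and $V$ lies in $\{0,\alpha,1-\alpha,1\}\subseteq[0,1]\cap\mathbb{F}$, and $UV=B$ is checked in one line using $\alpha+(1-\alpha)=1$. With the lower bound this gives $\operatorname{Rank}_{\mathbb{F}+}B=4$.

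For the converse, suppose $B=\sum_{t=1}^4u_tv_t^{\mathsf T}$ with nonnegative $u_t,v_t$. Since $\operatorname{rank}B=4$, the $u_t$ are independent, and since rows $2,3,4,5$ of $B$ are independent so are the restrictions $u_t|_{\{2,3,4,5\}}$. Column $1$ of $B$ vanishes in rows $3,4,5$, so every term $u_tv_t^{\mathsf T}$ with $(v_t)_1>0$ has $u_t$ supported on rows $\{1,2\}$; by the independence just noted at most one such term exists, and column $1$ is nonzero, so exactly one term meets column $1$---relabel it $u_1v_1^{\mathsf T}$. After rescaling, $u_1=(\alpha,1,0,0,0)^{\mathsf T}$ and $(v_1)_1=1$; the zeros of row $2$ of $B$ then give $v_1=(1,p,q,0,0)$. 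Now $B-u_1v_1^{\mathsf T}$ is nonnegative with zero first column, and its restriction $M$ to the remaining four columns (a $5\times4$ matrix) admits the three-term nonnegative factorization $M=\sum_{t=2}^4u_tw_t^{\mathsf T}$, where $w_t$ is $v_t$ with its vanishing first coordinate removed. Being a sum of three rank-one matrices, $M$ has $\operatorname{rank}M\le3$; but the lower $4\times4$ block of $M$ has determinant $q-p$, so $p=q$, and
\[
M=\begin{pmatrix}1-\alpha p&1-\alpha p&1&1\\ 1-p&1-p&0&0\\ 0&1&1&0\\ 0&0&1&1\\ 1&0&0&1\end{pmatrix}.
\]
Nonnegativity of $M$ gives $p\le1$, and if $p=1$ the nonnegativity of the entry $1-\alpha p$ gives $\alpha\le1$, which is what we want.

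Thus the whole problem reduces to ruling out $p<1$, and this is the step I expect to be delicate. Since rows $3,4,5$ of $M$ are independent we have $\operatorname{rank}M=3$, so $u_2|_{\{3,4,5\}},u_3|_{\{3,4,5\}},u_4|_{\{3,4,5\}}$ form a basis of $\mathbb{R}^3$. If $p<1$, row $2$ of $M$ is the nonzero vector $(1-p)(1,1,0,0)$, so any term with positive second coordinate has $w_t$ supported in the first two columns of $M$; tracking which of these terms can produce the entries $1-p$ in positions $(2,1)$ and $(2,2)$ and using the zeros in columns $1$ and $2$, one finds there are exactly two such terms, with $\{3,4,5\}$-parts proportional to $e_5$ and to $e_3$, which forces the remaining third term to have a positive fourth coordinate. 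Then the entry $1$ in position $(3,3)$ can be supplied only by this third term, so its $w_t$ is positive in column $3$; since column $3$ of $M$ vanishes in rows $2$ and $5$, the third term is supported on rows $\{1,3,4\}$; but then the entry $1$ in position $(5,4)$ receives nothing from any of the three terms---a contradiction. The real content here is that a rank-four nonnegative factorization forces the row of ones lying over $\mathcal{B}_0$ to be realized as a genuine convex combination of $R_2+R_4$ and $R_3+R_5$; packaging this bookkeeping cleanly is the only nontrivial point, the rest being routine.
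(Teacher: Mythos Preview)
Your proof is correct. The paper does not actually provide a proof of this observation; it simply states it as ``easy'' after remarking that the row of all ones over $\mathcal{B}_0$ can be written as any convex combination of $R_2+R_4$ and $R_3+R_5$. Your argument is precisely an unpacking of that remark: the explicit factorization $B=UV$ you give realizes the convex combination $\alpha(R_2+R_4)+(1-\alpha)(R_3+R_5)$, and your converse shows that any nonnegative rank-$4$ factorization is forced into this shape. So the approaches agree, and you have supplied the details the paper omits.

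One small comment on exposition: in the final case analysis (ruling out $p<1$) the logic is sound but the ordering of the deductions could be cleaner. The crisp way to phrase it is to look at columns of $M$ rather than rows: column~$1$ of $M$ vanishes in rows $3,4$, so any term meeting it has $u_t|_{\{3,4,5\}}$ proportional to $(0,0,1)$; column~$2$ similarly forces $(1,0,0)$. Independence of the three restrictions then pins down one term for each of columns~$1$ and~$2$, leaving the third term alone to cover columns~$3$ and~$4$; the zero pattern of column~$3$ kills its fifth coordinate, and then $M_{5,4}=1$ has no source. This is exactly what you do, but said column-first it reads more linearly.
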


In other words, this matrix has the following interesting property. For any $\varepsilon\in(0,1)$, the $5\times5$ matrices obtained from $\mathcal{B}(\varepsilon)$ by a small perturbation of the $(1,1)$ entry have the same nonnegative rank as the $(1,1)$ cofactor of $\mathcal{B}(\varepsilon)$. Basic results of linear algebra show that such a property cannot hold for a conventional rank function of matrices over a field. In fact, the usual rank of a matrix equals the order of the largest non-singular submatrix, so a small perturbation of an entry would lead to a matrix with rank greater than the rank of its cofactor. We use this distinction between the conventional and nonnegative rank functions in the following matrix completion problem: Given nonnegative matrices $A\in\mathbb{F}^{m\times n}$, $B\in\mathbb{F}^{m\times k}$, $c\in\mathbb{F}^{1\times n}$ and a subset $I\subset\mathbb{F}$, what is the smallest nonnegative rank of
\begin{equation}\label{eqAx}
\mathcal{A}(x)=\left(\begin{array}{c|c}
A&B\\\hline
c&\textcolor{blue}{}{x\ldots x}
\end{array}\right)
\end{equation}
provided that $x\in I$? Namely, we can reduce this completion version to the standard formulation of the nonnegative rank problem.

\begin{thr}\label{pr3}
Let $A\in\mathbb{F}^{m\times n}$, $B\in\mathbb{F}^{m\times k}$, $c\in\mathbb{F}^{1\times n}$ be nonnegative matrices, let $r,s$ be positive integers. We assume that either $\operatorname{rank}_{\mathbb{F}+} A\geqslant r$ or $k=1$. We define 
$$
\mathcal{G}=
\left(\begin{array}{c|c|cccccc}
&&0&0&0&0\\
{A}&{B}&\vdots&\vdots&\vdots&\vdots\\
&&0&0&0&0\\\hline
{c}&\textcolor{blue}{s\ldots s}&\textcolor{green2}{1}&\textcolor{green2}{1}&\textcolor{green2}{1}&\textcolor{green2}{1}\\\hline
0\ldots0&\textcolor{green2}{1\ldots1}&\textcolor{red}{1}&\textcolor{green2}{1}&\textcolor{green2}{0}&\textcolor{green2}{0}\\
0\ldots0&\textcolor{green2}{0\ldots0}&\textcolor{green2}{0}&\textcolor{red}{1}&\textcolor{green2}{1}&\textcolor{green2}{0}\\
0\ldots0&\textcolor{green2}{0\ldots0}&\textcolor{green2}{0}&\textcolor{green2}{0}&\textcolor{red}{1}&\textcolor{green2}{1}\\
0\ldots0&\textcolor{green2}{0\ldots0}&\textcolor{green2}{1}&\textcolor{green2}{0}&\textcolor{green2}{0}&\textcolor{red}{1}
\end{array}\right)
$$
and $\mathcal{A}(x)$ as in~\eqref{eqAx}. Then the inequality $\operatorname{rank}_{\mathbb{F}+}\mathcal{G}\leqslant r+4$ holds if and only if there is a $\xi\in[s-1,s]\cap\mathbb{F}$ such that $\operatorname{rank}_{\mathbb{F}+}\mathcal{A}(\xi)\leqslant r$.
\end{thr}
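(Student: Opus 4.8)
The proof splits into the straightforward construction and the substantial converse, and I will organise it that way.

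\emph{The ``if'' direction.} Suppose $\xi\in[s-1,s]\cap\mathbb{F}$ and $\mathcal{A}(\xi)=P_1+\dots+P_r$ with each $P_i$ a nonnegative rank-one matrix over $\mathbb{F}$. I would pad every $P_i$ by zeros to the size of $\mathcal{G}$, filling the last four columns and the last four rows with zeros. Since $s-\xi\in[0,1]\cap\mathbb{F}$, Observation~\ref{obs1} produces a four-term nonnegative rank-one decomposition over $\mathbb{F}$ of $\mathcal{B}(s-\xi,\dots,s-\xi)$ (with $k$ copies of $s-\xi$); I would pad each of those terms by zeros so that it occupies the last five rows and the columns of $B$ together with the last four columns. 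Summing the resulting $r+4$ rank-one matrices reproduces $\mathcal{G}$, the only overlap between the two groups being in row $m+1$ in the columns of $B$, where the contributions add up to $\xi+(s-\xi)=s$. This shows $\operatorname{rank}_{\mathbb{F}+}\mathcal{G}\le r+4$, and it uses neither hypothesis on $A$ nor $k=1$.

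\emph{The ``only if'' direction: setup.} Fix a decomposition $\mathcal{G}=G_1+\dots+G_{r+4}$ into nonnegative rank-one matrices over $\mathbb{F}$, and call $G_i$ a \emph{gadget term} if it is nonzero in one of the last four columns. Those four columns of $\mathcal{G}$ vanish on the first $m$ rows, so every gadget term is supported, along rows, inside the last five rows; and those columns contain $\mathcal{B}_0$, whose nonnegative rank is $4$, so there are at least four gadget terms. The hypothesis is what pins this down to exactly four: if $\operatorname{rank}_{\mathbb{F}+}A\ge r$, then restricting the decomposition to the submatrix $A$ exhibits $A$ as a sum of some of the $G_i$, none of them a gadget term, and hence at least $r$ of the $G_i$ are non-gadget terms, leaving at most four gadget terms; when $k=1$ a parallel, lighter count gives the same thing. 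So I may assume there are exactly four gadget terms and exactly $r$ non-gadget terms, each non-gadget term meeting $A$ in the case $\operatorname{rank}_{\mathbb{F}+}A\ge r$.

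\emph{Rigidity of the gadget and extraction of $\xi$.} Restricting the decomposition to rows $m+2,\dots,m+5$ and the last four columns exhibits $\mathcal{B}_0$ as a sum of the four gadget terms restricted there. Since $\mathcal{B}_0$ has no $2\times 2$ all-positive submatrix, each of these restricted terms lives on a single row or a single column; since the positive entries of $\mathcal{B}_0$ form an $8$-cycle and a cycle has exactly two perfect matchings, the four gadget terms restricted to this block are, up to positive scalars, either the four rows of $\mathcal{B}_0$ or its four columns. In the ``columns'' configuration each gadget term is positive in a row among $m+2,\dots,m+5$ that is entirely zero outside the last four columns, so all gadget terms vanish on the block formed by $A$, $B$, $c$ and the $s$-row; restricting the $r$ non-gadget terms to the first $m+1$ rows and first $n+k$ columns then decomposes $\mathcal{A}(s)$ into at most $r$ nonnegative rank-one matrices over $\mathbb{F}$, and $s\in[s-1,s]\cap\mathbb{F}$. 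In the ``rows'' configuration, three of the four gadget terms are positive in a row ($m+3$, $m+4$ or $m+5$) that is zero off the last four columns, so only the remaining gadget term — the one carrying row $m+2$ — can be nonzero in the columns of $B$; comparing its row-$(m+1)$ and row-$(m+2)$ contributions (and using that the non-gadget terms avoid row $m+2$, which is where $\operatorname{rank}_{\mathbb{F}+}A\ge r$ enters, or trivially that there is a single column of $B$, which is where $k=1$ enters) shows that it adds a constant $t\in[0,1]\cap\mathbb{F}$ to row $m+1$ in every column of $B$ and nothing else to that block. Then $\xi:=s-t\in[s-1,s]\cap\mathbb{F}$, and the $r$ non-gadget terms restricted to the first $m+1$ rows and first $n+k$ columns decompose $\mathcal{A}(\xi)$.

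\emph{Where the difficulty lies.} The main obstacle is exactly the bookkeeping of the previous paragraph: converting the soft facts $\operatorname{rank}_{+}\mathcal{B}_0=4$ and ``the last four columns have nonnegative rank $4$'' into the rigid statement that the four gadget terms realize the $\mathcal{B}_0$-pattern as its rows or as its columns and leave the block of $A,B,c$ untouched apart from a uniform shift of row $m+1$ in the columns of $B$. In particular, excluding stray gadget terms supported only on row $m+1$, and proving the active gadget term contributes the \emph{same} amount to each column of $B$ in row $m+1$, is precisely the place that needs the assumption ``$\operatorname{rank}_{\mathbb{F}+}A\ge r$ or $k=1$'', and arranging the argument so that it covers every configuration of the decomposition is the delicate point.
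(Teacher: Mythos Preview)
Your argument is correct in outline and can be completed, but it takes a genuinely different route from the paper's, and the difference is instructive.

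The paper isolates its four special summands not by the last four \emph{columns} but by the four \emph{red diagonal} entries of $\mathcal{B}_0$: a $G_i$ is ``red'' if it is nonzero at one of the positions $(m{+}1{+}j,\,n{+}k{+}j)$. Because no two red entries lie in a common $2\times 2$ all-positive submatrix of $\mathcal{G}$, each $G_i$ hits at most one red entry, so there are at least four red $G_i$'s; and since a red entry sits simultaneously in one of the last four rows and one of the last four columns, every red $G_i$ is automatically zero on the first $m$ rows \emph{and} the first $n$ columns. The two inequalities $\operatorname{rank}_{\mathbb{F}+}(A\,|\,B)\ge r$ and $\operatorname{rank}_{\mathbb{F}+}\binom{A}{c}\ge r$ (which follow from the hypothesis, or else one is done immediately when $k=1$) then force every non-red $G_i$ to be nonzero in some row $\le m$ \emph{and} in some column $\le n$, hence zero at every green entry. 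So the red sum $R$ agrees with $\mathcal{G}$ at all green and red positions and vanishes at all black ones; restricted to the last five rows and the last $k+4$ columns it is therefore $\mathcal{B}(\alpha_1,\ldots,\alpha_k)$ for some $\alpha_j$'s, and a single invocation of Observation~\ref{obs1} gives $\alpha_1=\cdots=\alpha_k=a\in[0,1]\cap\mathbb{F}$ with no case analysis at all.

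Your approach, defining gadget terms via the last four columns, only gives vanishing on the first $m$ rows for free, which is why you are pushed into the rows/columns dichotomy on $\mathcal{B}_0$ and the separate tracking of the row-$(m{+}2)$ contribution. That analysis is valid (your perfect-matching remark is right once one views the eight positive entries of $\mathcal{B}_0$ as the vertices of an $8$-cycle whose edges are ``same row'' and ``same column''), but it reproves by hand the content of Observation~\ref{obs1}. The places you flag as delicate --- the $k=1$ count and the uniformity of the shift across the $B$-columns --- are exactly what Observation~\ref{obs1} absorbs in the paper's version. Your sketch for $k=1$ (``a parallel, lighter count'', ``trivially a single column'') is recoverable: if there are more than four gadget terms then at most $r-1$ terms meet the $(A\,|\,B)$ block, so $\operatorname{rank}_{\mathbb{F}+}\mathcal{A}(\xi)\le r$ for every $\xi$ and one is done; and with a single $B$-column the ``constant across columns'' issue is vacuous while $t'\in[0,1]$ follows from the row-$(m{+}1)$ bookkeeping in the last four columns. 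So nothing is wrong, but the paper's choice of diagonal anchors plus Observation~\ref{obs1} buys you the whole converse in one stroke.
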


\begin{proof}
Note that $\mathcal{G}$ is the sum of $\mathcal{A}(\xi)$ and $\mathcal{B}(s-\xi,\ldots,s-\xi)$ up to adding zero rows and columns to the latter matrices. Since $\operatorname{rank}_{\mathbb{F}+}\mathcal{B}(s-\xi,\ldots,s-\xi)=4$ by Observation~\ref{obs1}, the 'if' direction follows immediately.

Let us prove the 'only if' direction. First, we note that the inequalities
\begin{equation}\label{eq1}\operatorname{rank}_{\mathbb{F}+} \left(A\left|\right. B\right)\geqslant r\mbox{$ $ $ $ and $ $ $ $} \operatorname{rank}_{\mathbb{F}+} \begin{pmatrix}A\\\hline c\end{pmatrix}\geqslant r\end{equation}
hold trivially if $\operatorname{rank}_{\mathbb{F}+} A\geqslant r$. (We note that the block matrices involved in~\eqref{eq1} have sizes $m\times(n+k)$ and $(m+1)\times n$, respectively.) If $k=1$, then $\mathcal{A}(x)$ can be obtained by adding a row (a column, respectively) to the matrix in the left (right, respectively) inequality of~\eqref{eq1}. Therefore, if~\eqref{eq1} were not true, we would get $\operatorname{rank}_{\mathbb{F}+}\mathcal{A}(x)\leqslant r$ and complete the proof. So we can assume that the inequalities~\eqref{eq1} are true.

Now we assume that $G_1,\ldots,G_{r+4}$ are nonnegative rank-one matrices that sum to $\mathcal{G}$. If there were a $G_i$ with two non-zero red entries, then $\mathcal{G}$ would have a $2\times 2$ submatrix with positive entries two of which are red, --- but this is not the case. Therefore, any $G_i$ contains at most one non-zero red entry, so there are at least four $G_i$'s with non-zero red entries. We call these $G_i$'s red, denote their sum by $\textcolor{red}{R}$, and observe that they have zero black entries. Using the first inequality in~\eqref{eq1}, we get that there are $r$ non-red $G_i$'s, and every of them has a non-zero entry either in the block corresponding to $A$ or in the block corresponding to $B$. Using the second inequality in~\eqref{eq1}, we see that every non-red $G_i$ contains a non-zero entry either in the block corresponding to $A$ or in the block corresponding to $c$. The two previous sentences allow us to conclude that the non-red $G_i$'s do not contribute to the green entries of $\mathcal{G}$. Taking into account Observation~\ref{obs1}, we get that, for some $a\in[0,1]$, the matrix $\textcolor{red}{R}$ equals $\mathcal{B}(a,\ldots,a)$ up to the zeroes at black positions. Therefore, the top-left $(m+1)\times(n+k)$ submatrix of $\mathcal{G}-\textcolor{red}{R}$ is the matrix $\mathcal{A}(s-a)$ as in~\eqref{eqAx}, which completes the proof.
\end{proof}

\begin{cor}\label{pr33}
Let $\mathcal{A}(x)$, $\mathcal{G}$ be as in Theorem~\ref{pr3}. If $k=1$, then we have
$$\operatorname{rank}_{\mathbb{F}+}\mathcal{G}=\min \operatorname{rank}_{\mathbb{F}+}\mathcal{A}(\xi)+4,$$
where the minimum is taken over all $\xi\in[s-1,s]\cap\mathbb{F}$.
\end{cor}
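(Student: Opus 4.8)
The plan is to prove both inequalities $\operatorname{rank}_{\mathbb{F}+}\mathcal{G}\leqslant\mu+4$ and $\operatorname{rank}_{\mathbb{F}+}\mathcal{G}\geqslant\mu+4$, where $\mu$ denotes the minimum on the right-hand side. First I record that $\mu$ is well defined: the set $\{\operatorname{rank}_{\mathbb{F}+}\mathcal{A}(\xi)\colon\xi\in[s-1,s]\cap\mathbb{F}\}$ consists of nonnegative integers and is nonempty, since the integer $s$ lies in $[s-1,s]\cap\mathbb{F}$; hence the minimum exists and is attained at some $\xi_0$.

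For the bound $\operatorname{rank}_{\mathbb{F}+}\mathcal{G}\leqslant\mu+4$ I would reuse the decomposition noted at the start of the proof of Theorem~\ref{pr3}: up to adjoining zero rows and columns, $\mathcal{G}$ is the sum of $\mathcal{A}(\xi_0)$ and $\mathcal{B}(s-\xi_0,\ldots,s-\xi_0)$. Adjoining zero rows and columns does not affect the nonnegative rank, the nonnegative rank is subadditive under matrix addition, and $\operatorname{rank}_{\mathbb{F}+}\mathcal{B}(s-\xi_0,\ldots,s-\xi_0)=4$ by Observation~\ref{obs1} because $s-\xi_0\in[0,1]\cap\mathbb{F}$; putting these together yields the bound with no case distinction.

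For the reverse inequality the natural move is to set $r:=\operatorname{rank}_{\mathbb{F}+}\mathcal{G}-4$ and apply the ``only if'' direction of Theorem~\ref{pr3}, whose hypothesis holds since $k=1$; it produces a $\xi\in[s-1,s]\cap\mathbb{F}$ with $\operatorname{rank}_{\mathbb{F}+}\mathcal{A}(\xi)\leqslant r$, whence $\mu\leqslant\operatorname{rank}_{\mathbb{F}+}\mathcal{G}-4$. The main obstacle is that Theorem~\ref{pr3} requires $r$ to be a \emph{positive} integer, so this covers only the case $\operatorname{rank}_{\mathbb{F}+}\mathcal{G}\geqslant5$. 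I would therefore first observe $\operatorname{rank}_{\mathbb{F}+}\mathcal{G}\geqslant4$: the last five rows of $\mathcal{G}$, restricted to the columns numbered $n+1,\ldots,n+k+4$, form the matrix $\mathcal{B}(s,\ldots,s)$, which in turn contains a copy of $\mathcal{B}_0$, and $\operatorname{rank}_{\mathbb{F}+}\mathcal{B}_0\geqslant\operatorname{rank}_{\mathbb{R}+}\mathcal{B}_0=4$. The only remaining case is then $\operatorname{rank}_{\mathbb{F}+}\mathcal{G}=4$, where I must show $\mu=0$.

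To handle that degenerate case I would revisit the structural analysis inside the proof of Theorem~\ref{pr3}. Writing $\mathcal{G}$ as a sum of four nonnegative rank-one matrices over $\mathbb{F}$, each summand has at most one nonzero red entry; since there are four red positions and all of them must be covered, each summand is red and hence, as observed there, carries no nonzero black entry. Consequently $\mathcal{G}$ has no nonzero black entry, i.e.\ $A$, $B$ and $c$ all vanish; then $\mathcal{G}$ equals $\mathcal{B}(s,\ldots,s)$ up to zero rows and columns, so $\operatorname{rank}_{\mathbb{F}+}\mathcal{B}(s,\ldots,s)=4$, and Observation~\ref{obs1} forces $s\in[0,1]$, hence $s=1$. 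Now $\mathcal{A}(0)$ is the zero matrix and $0\in[s-1,s]\cap\mathbb{F}$, so $\mu=0=\operatorname{rank}_{\mathbb{F}+}\mathcal{G}-4$. Combining the two inequalities gives $\operatorname{rank}_{\mathbb{F}+}\mathcal{G}=\mu+4$. The only point I expect to require care is invoking the two structural facts from the proof of Theorem~\ref{pr3} in a self-contained way; the rest is routine.
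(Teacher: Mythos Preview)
Your argument is correct and follows the route the paper intends: the corollary is stated without proof, as an immediate consequence of applying Theorem~\ref{pr3} with varying $r$, and your two inequalities do exactly that. The only extra work you do is the degenerate case $\operatorname{rank}_{\mathbb{F}+}\mathcal{G}=4$, which arises solely because Theorem~\ref{pr3} is stated for \emph{positive} $r$; your treatment of it, re-running the red--black analysis from the proof of Theorem~\ref{pr3} to force $A=B=c=0$ and $s=1$, is sound (and in fact shows that the proof of Theorem~\ref{pr3} goes through verbatim for $r=0$, so one could equally well just note that the positivity assumption on $r$ is inessential).
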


\section{On the complexity of nonnegative rank}

Corollary~\ref{pr33} allows us to construct a polynomial time reduction to nonnegative rank from the following problem, which is known (see~\cite{Watson}) as the \textit{nonnegative rank of a partial 0-1 matrix}. Given a matrix $X$ whose $(i,j)$ entry can be $0,1$ or $x_{ij}$; what is the smallest nonnegative rank of matrices that are obtained from $X$ by assigning numbers in $[0,1]$ to the variables $x_{ij}$? We are going to prove that this problem is NP-hard. 

Let $G$ be a simple graph with vertex set $V$ and edge set $E$. A \textit{clique} in $G$ is a subset $U\subset V$ such that $\{u_1,u_2\}\in E$ for all distinct $u_1,u_2\in U$. The \textit{clique covering number} of $G$ is the smallest number of cliques needed to cover $V$.
Let $X=X(G)$ be the matrix (whose rows and columns are indexed with vertices of $G$) defined by $X_{vv}=1$, $X_{uv}=x_{uv}$ if $u,v$ are adjacent, and $X_{uv}=0$ otherwise.

Let $U_1,\ldots,U_c$ be cliques whose union is $V$, and we assume without loss of generality that these cliques are disjoint. For all $i\in\{1,\ldots,c\}$, we define the matrix $H^i$ as $H^i_{\alpha\beta}=1$ if $\alpha,\beta\in U_i$ and $H^i_{\alpha\beta}=0$ otherwise. Clearly, the matrix $H_1+\ldots+H_c$ has nonnegative rank at most $c$ and can be obtained from $X$ by replacing the variables with zeros and ones.


Conversely, let $M^1,\ldots,M^r$ be nonnegative rank-one matrices whose sum can be obtained from $X$ by replacing the variables with numbers in $[0,1]$. If one of the sets $V^l=\{v\in V\left|\right.M^l_{vv}>0\}$ was not a clique, we would find non-adjacent distinct vertices $u,v$ in $V^l$. This would imply $X_{uv}=0$ and contradict $M^l_{uv}>0$. So we see that $V^1,\ldots,V^r$ are cliques, and their union is $V$. This proves that the clique covering number of $G$ equals the smallest nonnegative rank of matrices that are obtained from $X(G)$ by replacing the variables with numbers in $[0,1]$. Therefore, since the clique covering number is NP-hard to compute (see~\cite{Karp}), so is the nonnegative rank for partial 0-1 matrices. The observation in the beginning of this section shows that the standard version of the nonnegative rank problem is NP-hard as well.

As pointed out in the introduction, Lemma~3.3 in~\cite{JR} contains essentially the same result. Namely, Jiang and Ravikumar prove the NP-hardness of the \textit{normal set basis problem}, which is a reformulation of the more commonly known \textit{biclique partition number} problem. We refer the reader to~\cite{CHHK} for a discussion of these two formulations and further complexity results. There is also a straightforward correspondence between the biclique partition number of a bipartite graph and the nonnegative rank of its adjacency matrix, see Remark~6.4 in~\cite{Orl} and Lemma~2.2 in~\cite{GP}. We hope that these references can help an interested reader to translate the proof of Lemma~3.3 in~\cite{JR} into the language of matrix theory and come up with the NP-hardness proof for nonnegative rank.

\section{Nonnegative rank depends on the field}

We proceed with a solution of the Cohen--Rothblum problem. To this end, we consider the matrix
$$
\begin{pmatrix}
\textcolor{magenta}{2}&\textcolor{yellow2}{2}&\textcolor{yellow2}{2}&\textcolor{yellow2}{1}&\textcolor{yellow2}{0}&0&0&0&0&0&0&0&0&0&0&0&0&\textcolor{magenta}{1}&\textcolor{magenta}{1}&\textcolor{magenta}{1}&\textcolor{magenta}{1}\\
\textcolor{yellow2}{1}&\textcolor{yellow2}{2}&\textcolor{yellow2}{1}&\textcolor{yellow2}{0}&\textcolor{yellow2}{1}&0&0&0&0&0&0&0&0&0&0&0&0&0&0&0&0\\
\textcolor{yellow2}{0}&\textcolor{yellow2}{0}&\textcolor{yellow2}{1}&\textcolor{green2}{2}&\textcolor{yellow2}{0}&0&0&0&0&0&0&0&0&\textcolor{green2}{1}&\textcolor{green2}{1}&\textcolor{green2}{1}&\textcolor{green2}{1}&0&0&0&0\\
\textcolor{yellow2}{0}&\textcolor{yellow2}{1}&\textcolor{yellow2}{0}&\textcolor{yellow2}{0}&\textcolor{blue}{2}&0&0&0&0&\textcolor{blue}{1}&\textcolor{blue}{1}&\textcolor{blue}{1}&\textcolor{blue}{1}&0&0&0&0&0&0&0&0\\
\textcolor{yellow2}{0}&\textcolor{yellow2}{1}&\textcolor{yellow2}{1}&\textcolor{red}{2}&\textcolor{red}{2}&\textcolor{red}{1}&\textcolor{red}{1}&\textcolor{red}{1}&\textcolor{red}{1}&0&0&0&0&0&0&0&0&0&0&0&0\\
0&0&0&\textcolor{red}{1}&\textcolor{red}{1}&\textcolor{red}{1}&\textcolor{red}{1}&\textcolor{red}{0}&\textcolor{red}{0}&0&0&0&0&0&0&0&0&0&0&0&0\\
0&0&0&\textcolor{red}{0}&\textcolor{red}{0}&\textcolor{red}{0}&\textcolor{red}{1}&\textcolor{red}{1}&\textcolor{red}{0}&0&0&0&0&0&0&0&0&0&0&0&0\\
0&0&0&\textcolor{red}{0}&\textcolor{red}{0}&\textcolor{red}{0}&\textcolor{red}{0}&\textcolor{red}{1}&\textcolor{red}{1}&0&0&0&0&0&0&0&0&0&0&0&0\\
0&0&0&\textcolor{red}{0}&\textcolor{red}{0}&\textcolor{red}{1}&\textcolor{red}{0}&\textcolor{red}{0}&\textcolor{red}{1}&0&0&0&0&0&0&0&0&0&0&0&0\\
0&0&0&0&\textcolor{blue}{1}&0&0&0&0&\textcolor{blue}{1}&\textcolor{blue}{1}&\textcolor{blue}{0}&\textcolor{blue}{0}&0&0&0&0&0&0&0&0\\
0&0&0&0&\textcolor{blue}{0}&0&0&0&0&\textcolor{blue}{0}&\textcolor{blue}{1}&\textcolor{blue}{1}&\textcolor{blue}{0}&0&0&0&0&0&0&0&0\\
0&0&0&0&\textcolor{blue}{0}&0&0&0&0&\textcolor{blue}{0}&\textcolor{blue}{0}&\textcolor{blue}{1}&\textcolor{blue}{1}&0&0&0&0&0&0&0&0\\
0&0&0&0&\textcolor{blue}{0}&0&0&0&0&\textcolor{blue}{1}&\textcolor{blue}{0}&\textcolor{blue}{0}&\textcolor{blue}{1}&0&0&0&0&0&0&0&0\\
0&0&0&\textcolor{green2}{1}&0&0&0&0&0&0&0&0&0&\textcolor{green2}{1}&\textcolor{green2}{1}&\textcolor{green2}{0}&\textcolor{green2}{0}&0&0&0&0\\
0&0&0&\textcolor{green2}{0}&0&0&0&0&0&0&0&0&0&\textcolor{green2}{0}&\textcolor{green2}{1}&\textcolor{green2}{1}&\textcolor{green2}{0}&0&0&0&0\\
0&0&0&\textcolor{green2}{0}&0&0&0&0&0&0&0&0&0&\textcolor{green2}{0}&\textcolor{green2}{0}&\textcolor{green2}{1}&\textcolor{green2}{1}&0&0&0&0\\
0&0&0&\textcolor{green2}{0}&0&0&0&0&0&0&0&0&0&\textcolor{green2}{1}&\textcolor{green2}{0}&\textcolor{green2}{0}&\textcolor{green2}{1}&0&0&0&0\\
\textcolor{magenta}{1}&0&0&0&0&0&0&0&0&0&0&0&0&0&0&0&0&\textcolor{magenta}{1}&\textcolor{magenta}{1}&\textcolor{magenta}{0}&\textcolor{magenta}{0}\\
\textcolor{magenta}{0}&0&0&0&0&0&0&0&0&0&0&0&0&0&0&0&0&\textcolor{magenta}{0}&\textcolor{magenta}{1}&\textcolor{magenta}{1}&\textcolor{magenta}{0}\\
\textcolor{magenta}{0}&0&0&0&0&0&0&0&0&0&0&0&0&0&0&0&0&\textcolor{magenta}{0}&\textcolor{magenta}{0}&\textcolor{magenta}{1}&\textcolor{magenta}{1}\\
\textcolor{magenta}{0}&0&0&0&0&0&0&0&0&0&0&0&0&0&0&0&0&\textcolor{magenta}{1}&\textcolor{magenta}{0}&\textcolor{magenta}{0}&\textcolor{magenta}{1}
\end{pmatrix}
$$
\noindent and denote it by $\mathcal{M}$. We are going to show that the rational and real nonnegative ranks of $\mathcal{M}$ are different. Our strategy is to apply Theorem~\ref{pr3} to $\mathcal{M}$, and we assume that $\mathbb{F}$ is a field as in Section~2. Let us remove the last twelve rows and columns from $\mathcal{M}$, replace the $(1,1)$, $(3,4)$, $(4,5)$ entries by the variables $a,b,c$, and denote the resulting matrix by $\mathcal{M}_1(a,b,c)$. The threefold application of Corollary~\ref{pr33} implies
\begin{equation}\label{eqpr1}\operatorname{rank}_{\mathbb{F}+}\mathcal{M}=\min\limits_{a,b,c\in[1,2]\cap\mathbb{F}}\,\operatorname{rank}_{\mathbb{F}+}\mathcal{M}_1(a,b,c)+12.\end{equation} Observing that the $3\times 3$ submatrix defined as the intersection of the second, third, and fourth rows and the first three columns of $\mathcal{M}$ has rank three, we apply Theorem~\ref{pr3} with $r=3$ to $\mathcal{M}_1(a,b,c)$. Taking into account~\eqref{eqpr1}, we get that the inequality $\operatorname{rank}_{\mathbb{F}+}\mathcal{M}\leqslant19$ holds if and only if there are $a,b,c,d\in[1,2]\cap\mathbb{F}$ such that the matrix
$$
\mathcal{C}(a,b,c,d)=\begin{pmatrix}
a&2&2&1&0\\
1&2&1&0&1\\
0&0&1&b&0\\
0&1&0&0&c\\
0&1&1&d&d
\end{pmatrix}
$$
has nonnegative rank at most three with respect to $\mathbb{F}$. Basic tools of linear algebra allow us to show that the conventional rank of $\mathcal{C}(a,b,c,d)$ exceeds three unless $b=c=d=1\pm\sqrt{0.5}$ and $a=2-1/b$. In particular, the rational nonnegative rank of $\mathcal{C}$ cannot be less than four, which implies $\operatorname{rank}_{\mathbb{Q}+}\mathcal{M}\geqslant20$. On the other hand, for $\alpha=1+\sqrt{0.5}$,
$$\begin{pmatrix}
0&0&0&0&0\\
0&\alpha^{-1}&0&0&1\\
0&0&0&0&0\\
0&1&0&0&\alpha\\
0&1&0&0&\alpha
\end{pmatrix}
+
\begin{pmatrix}
0&0&\alpha^{-1}&1&0\\
0&0&0&0&0\\
0&0&1&\alpha&0\\
0&0&0&0&0\\
0&0&1&\alpha&0
\end{pmatrix}
+\begin{pmatrix}
\sqrt{2}&2&\sqrt{2}&0&0\\
1&\sqrt{2}&1&0&0\\
0&0&0&0&0\\
0&0&0&0&0\\
0&0&0&0&0
\end{pmatrix}
$$
is a representation of $\mathcal{C}(\sqrt{2},\alpha,\alpha,\alpha,\alpha)$ as a sum of three nonnegative rank-one matrices. This shows that $\operatorname{rank}_{\mathbb{R}+}\mathcal{M}\leqslant19$ and completes the proof that the rational and real nonnegative ranks of $\mathcal{M}$ are different.

\section{Note added in proof}

The idea of my construction comes from the earlier paper~\cite{myBool}, which contains a similar NP-hardness proof but in the setting of Boolean matrices. As I have subsequently learned, Theorem 3.3 in~\cite{theirBool} has the same formulation and proof as the result in~\cite{myBool} but in the language of graph theory. I feel sorry that I did not know about~\cite{theirBool} when I was preparing~\cite{myBool} for publication, but such ocurrences seem to be ubiquitous in modern mathematics and hard to avoid. (As said above, a similar situation arose with Lemma 3.3 in~\cite{JR}, which states the NP-hardness of nonnegative rank but not in the terms common in applied mathematics.)

\section{Acknowledgements}

This work was carried out at National Research University --- Higher School of Economics in Moscow. I owe my gratitude to the university for their support of research activities and to my students, from whom I probably learned more than they from me.
I would like to thank the editors and anonymous referees of \textit{SIAM Review} for careful reading of the paper and helpful suggestions, which allowed me to improve the presentation of the results. Before I decided to include the solution of the Cohen--Rothblum problem in this paper, the manuscript~\cite{myCR} was undergoing the reviewing process in \textit{SIAM Journal of Applied Algebraic Geometry}. I am grateful to the referee of that journal for the detailed report and helpful comments.

\end{document}